 \newtheorem{theorem}{Theorem}[section]
 \newtheorem{corollary}[theorem]{Corollary}
 \theoremstyle{definition}
 \newtheorem{definition}[theorem]{Definition}
 \theoremstyle{remark}
 \newtheorem{remark}[theorem]{Remark}
 \numberwithin{equation}{section}
\begin{document}

%
%
%
%
%
%
%
%
%

\title[Sequentially Congruent Partitions]
 {Sequentially Congruent Partitions\\  and Related Bijections}

\author{Maxwell Schneider}

\address{
Honors Program\newline
University of Georgia\newline
Athens, Georgia 30602}
\email{maxwell.schneider@uga.edu}%

\author{Robert Schneider}
\address{Department of Mathematics\newline
University of Georgia\newline
Athens, Georgia 30602}
\email{robert.schneider@uga.edu}
\subjclass{Primary 05A17; Secondary 11P84}

\keywords{Partitions, $q$-series, generating function, partition ideal}

\dedicatory{In honor of George E. Andrews on his 80th birthday}

\begin{abstract}
We study a curious class of partitions, the parts of which obey an exceedingly strict congruence condition we refer to as ``sequential congruence'': the $m$th part is congruent to the $(m+1)$th part modulo $m$, with the smallest part congruent to zero modulo the length of the partition. It turns out these obscure-seeming objects are embedded in a natural way in partition theory. We show that sequentially congruent partitions with largest part $n$ are in bijection with the partitions of $n$. Moreover, we show sequentially congruent partitions induce a bijection between partitions of $n$ and partitions of length $n$ 
whose parts obey a strict ``frequency congruence'' condition --- the frequency (or multiplicity) of each part is divisible by that part --- and prove families of similar bijections, connecting with G. E. Andrews's theory of partition ideals.
\end{abstract}

\maketitle
\section{Introduction}


Here we consider a somewhat exotic subset of {integer partitions}, which turns out to be naturally embedded in partition theory. 

Let $\mathcal P$ denote the set of {\it partitions}, with elements $\lambda = (\lambda_1, \lambda_2, ..., \lambda_r)$, $\lambda_1 \geq \lambda_2 \geq ...\geq \lambda_r\geq 1$, including the {\it empty partition} $\emptyset$. Alternatively, following Andrews \cite{Andrews_theory}, Fine \cite{Fine} and other authors, one sometimes writes 
$\lambda = (1^{m_1}\  2^{m_2}\  3^{m_3}\  ...)$ with $m_i=m_i(\lambda)$ the {\it frequency} (or {\it multiplicity}) of $i\in \mathbb N$ as a part of $\lambda$, setting $m_i(\emptyset)=0$ for all $i$. Furthermore, for a given partition $\lambda$, let $|\lambda|$ denote its {\it size} (sum of the parts) and $\ell(\lambda):=r$ denote its {\it length} (number of parts), with the conventions $|\emptyset|:=0,\  \ell(\emptyset):=0$. 

We define the set $\mathcal S \subset \mathcal P$ of {\it sequentially congruent partitions} as follows.

\begin{definition}
We define a partition $\lambda$ to be {\it sequentially congruent} if 
the following congruences between the parts are all satisfied:
$$\lambda_1 \equiv \lambda_2\  (\operatorname{mod}\  1),\  
\lambda_2 \equiv \lambda_3\  (\operatorname{mod}\  2),\  \lambda_3 \equiv \lambda_4\  (\operatorname{mod}\  3),\  ...\  ,$$ 
$$\lambda_{r-1} \equiv \lambda_r\  (\operatorname{mod}\  r-1),$$
and for the smallest part, $\lambda_r \equiv 0\  (\operatorname{mod}\  r).$ 
\end{definition}

For example, the partition $(20,17,15,9,5)$ 
is sequentially congruent, because $20 \equiv 17\  (\operatorname{mod}\  1)$ trivially, $17\equiv 15\  (\operatorname{mod}\  2)$, $15\equiv 9\  (\operatorname{mod}\  3)$, $9\equiv 5\  (\operatorname{mod}\ 4)$, and finally $5\equiv 0\  (\operatorname{mod}\  5)$. On the other hand, $(21,18,16,10,6)$ is {\it not} sequentially congruent, for while the first four congruences still hold, clearly $6\not\equiv 0\  (\operatorname{mod}\  5)$. Note that increasing the largest part $\lambda_1$ of any $\lambda \in \mathcal S$ yields another partition in $\mathcal S$, as does adding or subtracting a fixed integer multiple of the length $r$ to all its parts, so long as the resulting parts are still positive.

No doubt, this strict congruence restriction on the parts hardly appears natural. However, it turns out sequentially congruent partitions are in one-to-one correspondence with the entire set $\mathcal P$.

\section{Bijections Between $\mathcal S$ and $\mathcal P$}\label{Sect2}
Let $\mathcal P_n$ denote the set of partitions of $n$, as usual let $p(n)=\#\mathcal P_n$ (with $\#Q$ the cardinality of a set $Q$), and let $\mathcal S_{\operatorname{lg}=n}$ denote sequentially congruent partitions $\lambda'$ whose largest part $\lambda_{1}'$ equals $n$.

\begin{theorem}\label{Cthm1}
There exists a 
bijection $\pi$ 
between the set $\mathcal P$ and the set $\mathcal S$ such that 
$$\pi(\mathcal P_n)=\mathcal S_{\operatorname{lg}=n}.$$
Moreover, we have 
$$\#\mathcal S_{\operatorname{lg}=n}=p(n).$$
\end{theorem}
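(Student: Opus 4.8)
The plan is to construct $\pi$ explicitly from the scaled frequencies of a partition, to exhibit a two-sided inverse built from successive differences, and then to read off the cardinality statement as an immediate corollary. First I would record the structure that sequential congruence forces. If $\lambda=(\lambda_1,\dots,\lambda_r)\in\mathcal S$, then $k\mid(\lambda_k-\lambda_{k+1})$ for $1\le k\le r-1$ and $r\mid\lambda_r$ by definition. Because $\lambda$ is weakly decreasing with positive parts, the integers $a_k:=(\lambda_k-\lambda_{k+1})/k$ (for $1\le k\le r-1$) and $a_r:=\lambda_r/r$ are all nonnegative, with $a_r\ge 1$. Telescoping these differences yields the key identity $\lambda_1=\sum_{k=1}^{r}k\,a_k$, which says precisely that the largest part of $\lambda$ equals the size of the partition $\mu:=(1^{a_1}2^{a_2}\cdots r^{a_r})$, a partition whose largest part is $\ell(\lambda)=r$.

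Guided by this observation, I would define $\pi$ directly in frequency notation. Given $\mu=(1^{m_1}2^{m_2}\cdots)\in\mathcal P_n$ with largest part $r$ (so that $m_r\ge 1$), set $\pi(\mu)=\lambda$ where $\lambda_j:=\sum_{i\ge j} i\,m_i$ for $1\le j\le r$. I would then verify $\pi(\mu)\in\mathcal S_{\operatorname{lg}=n}$ by checking four points: the sequence is weakly decreasing, since $\lambda_j-\lambda_{j+1}=j\,m_j\ge 0$; the length is exactly $r$, since $\lambda_j\ge\lambda_r=r\,m_r\ge r\ge 1$ shows each of the $r$ parts is positive; the sequential congruences hold by construction, as $\lambda_j-\lambda_{j+1}=j\,m_j\equiv 0\pmod{j}$ and $\lambda_r=r\,m_r\equiv 0\pmod{r}$; and the largest part is $\lambda_1=\sum_{i\ge 1} i\,m_i=|\mu|=n$.

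Next I would confirm that the two constructions are mutually inverse. Applying the difference operation $a_k=(\lambda_k-\lambda_{k+1})/k$ to $\lambda=\pi(\mu)$ returns $a_k=m_k$ for every $k$, hence recovers $\mu$; conversely, forming the partial sums $\sum_{i\ge j} i\,a_i$ from the quotients $a_k$ read off an arbitrary $\lambda\in\mathcal S$ reproduces $\lambda$ by telescoping. Thus $\pi\colon\mathcal P\to\mathcal S$ is a bijection, with $\emptyset\mapsto\emptyset$ as the degenerate case $r=0$, and its restriction to $\mathcal P_n$ is a bijection onto $\mathcal S_{\operatorname{lg}=n}$. The cardinality statement $\#\mathcal S_{\operatorname{lg}=n}=\#\mathcal P_n=p(n)$ then follows at once.

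I expect the only real obstacle to be conceptual bookkeeping rather than any hard estimate: the heart of the argument is the observation that the sequential congruence condition is exactly what makes each difference $(\lambda_k-\lambda_{k+1})/k$ and the quotient $\lambda_r/r$ an integer, so the frequency vector of $\mu$ is well defined, and that the constraint $m_r\ge 1$ (largest part equal to $r$) corresponds under the map to $\lambda_r\ge 1$ together with $r\mid\lambda_r$. The one point demanding care is that $\pi(\mu)$ has length exactly the largest part of $\mu$ with no part degenerating to zero, which is secured by the inequality $\lambda_r=r\,m_r\ge r$.
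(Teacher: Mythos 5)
Your proof is correct, but the bijection you construct is not the one the paper uses. The paper's map sends $\lambda=(\lambda_1,\dots,\lambda_r)$ to $\lambda'$ with $\lambda_i'=i\lambda_i+\sum_{j>i}\lambda_j$, indexed by the \emph{parts} of $\lambda$, and inverts it by a right-to-left division algorithm. Your map, built from the \emph{frequencies} via $\lambda_j=\sum_{i\ge j} i\,m_i$, is a different bijection: on $(3,1)$ the paper's $\pi$ gives $(4,2)$ while yours gives $(4,3,3)$. In fact your map is exactly the inverse of the map $\sigma(\phi)=(1^{\phi_1-\phi_2}\,2^{(\phi_2-\phi_3)/2}\cdots)$ that the paper introduces in Section \ref{Sect3}, equivalently the paper's $\pi$ precomposed with conjugation (since $\sigma\circ\pi$ is conjugation by Theorem \ref{Cthm3}). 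Since conjugation preserves $\mathcal P_n$ and the theorem only asserts existence of \emph{some} bijection carrying $\mathcal P_n$ onto $\mathcal S_{\operatorname{lg}=n}$, your argument proves the statement, and arguably more transparently: the telescoping identity $\lambda_1=\sum_k k\,a_k$ makes both the invertibility and the ``largest part equals size'' claim immediate, whereas the paper must argue these separately. The one caveat is contextual rather than logical: the paper's later results (notably that $\sigma\circ\pi$ is conjugation) depend on the specific $\pi$ defined there, so your $\pi$ could not be substituted into the rest of the paper without altering those statements --- with your map, $\sigma\circ\pi$ would be the identity.
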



\begin{proof}
We prove the theorem 
directly by construction. 

For partition $\lambda=(\lambda_1, \lambda_2,...,\lambda_i,...,\lambda_r)$, one constructs a sequentially congruent dual $$\lambda'=(\lambda_1',\lambda_2',...,\lambda_i',...,\lambda_r')$$ 
by taking the parts equal to
\begin{equation}\label{Cconstruction1}
\lambda_i'=i\lambda_i+\sum_{j=i+1}^{r}\lambda_j.
\end{equation}
Note that $\lambda_r'\equiv 0\  (\operatorname{mod}\  r)$ as $\sum_{j=r+1}^{r}$ is empty; the other congruences between successive parts of $\lambda'$ are also immediate from equation (\ref{Cconstruction1}). 

Let us take $$\pi \colon \mathcal P \to \mathcal S$$ to be the map defined by this construction, with $\lambda'=\pi(\lambda)$. 
The above argument establishes, in fact, that we have more strongly $\pi\colon \mathcal P_n \to \mathcal S_{\text{lg}=n}$. 

Conversely, given a sequentially congruent partition $\lambda'$, one can recover the dual partition $\lambda$ by working from right-to-left. Begin by computing the smallest part 
\begin{equation}\label{Cconstruction1.5}
\lambda_r=\frac{\lambda_r'}{r},\end{equation}
then compute $\lambda_{r-1},\lambda_{r-2},...,\lambda_1$ in this order by taking
\begin{equation}\label{Cconstruction2}
\lambda_i=\frac{1}{i}\left( \lambda_i'-\sum_{j=i+1}^{r}\lambda_j \right).\end{equation}
We define the inverse map $\pi^{-1}$ from the algorithm in \eqref{Cconstruction1.5} and \eqref{Cconstruction2}, i.e., $\pi^{-1}(\lambda')=\lambda$: 
\begin{equation}\label{piinv} \pi^{-1}\colon \mathcal S \to \mathcal P.\end{equation} Noting that the uniqueness of $\lambda$ implies the uniqueness of $\lambda'$, and vice versa, the bijection between $\mathcal S$ and $\mathcal P$ follows from this two-way construction. 

Furthermore, since $\lambda_1'=|\lambda|$, then every partition $\lambda$ of $n$ corresponds to a sequentially congruent partition $\lambda'$ with largest part $n$, and vice versa. 
\end{proof}

The sets $\mathcal P$ and $\mathcal S$ enjoy another interrelation that can be used to compute the coefficients of infinite products. Now, it is a rewriting of Equation 22.16 in Fine \cite{Fine} that for a function $f\colon \mathbb N \to \mathbb C$ and $q\in \mathbb C$ with $f,q$ chosen such that the product converges absolutely, we have
\begin{equation}\label{Czeta_thm}
\prod_{n=1}^{\infty}(1-f(n)q^n)^{-1}=\sum_{\lambda \in \mathcal P}q^{|\lambda|}\prod_{i\geq 1}f(i)^{m_i},
\end{equation}
where $m_i=m_i(\lambda)$ is the frequency of $i$ as a part of $\lambda$, and the sum on the right is taken over all partitions $\lambda$. Of course the canonical case would be, for $|q|<1$, the identity 
\begin{equation}\label{Czeta_thm3}
\prod_{n=1}^{\infty}(1-xq^n)^{-1}=\sum_{\lambda \in \mathcal P}x^{\ell(\lambda)}q^{|\lambda|},
\end{equation}
which enjoys many beautiful $q$-series representations (see \cite{Andrews_theory, Berndt, Fine})

It follows from an extension of \eqref{Czeta_thm} in \cite{Schneider_zeta} that the product on the left side of \eqref{Czeta_thm} can also be expressed as a sum over {\it sequentially congruent} partitions. 

Let $\operatorname{lg}(\lambda)=\lambda_1$ denote the {\it largest part} of partition $\lambda$, and set $\lambda_k=0$ if $k>\ell(\lambda)$.

\begin{theorem}\label{Cthm2} For $f\colon \mathbb N \to \mathbb C, q\in \mathbb C$ such that the product converges absolutely, we have
\begin{equation*}
\prod_{n=1}^{\infty}(1-f(n)q^n)^{-1}=\sum_{\lambda \in \mathcal S}q^{\operatorname{lg}(\lambda)}\prod_{i\geq 1}f(i)^{(\lambda_i-\lambda_{i+1})/i}.
\end{equation*}

\end{theorem}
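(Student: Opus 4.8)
The plan is to reparametrize $\mathcal S$ directly by the exponents appearing on the right-hand side. For $\lambda \in \mathcal S$ of length $r$, adopt the convention $\lambda_k = 0$ for $k > r$ and set $a_i := (\lambda_i - \lambda_{i+1})/i$ for each $i \geq 1$. First I would check that $(a_i)_{i\geq 1}$ is a finitely supported sequence of non-negative integers: the defining congruences of $\mathcal S$ give $i \mid (\lambda_i - \lambda_{i+1})$ for $1 \leq i \leq r-1$ and $r \mid \lambda_r = \lambda_r - \lambda_{r+1}$ for $i = r$, so each $a_i \in \mathbb Z$; the inequality $\lambda_i \geq \lambda_{i+1}$ forces $a_i \geq 0$; and $a_i = 0$ once $i > r$.

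Next I would show that $\lambda \mapsto (a_i)_{i \geq 1}$ is a bijection from $\mathcal S$ onto the set of finitely supported sequences of non-negative integers. The inverse is obtained by telescoping, $\lambda_i = \sum_{j \geq i} j a_j$, exactly as in the right-to-left reconstruction \eqref{Cconstruction1.5}--\eqref{Cconstruction2} underlying Theorem \ref{Cthm1}; one verifies that any such sequence produces a genuine element of $\mathcal S$ (the resulting parts are weakly decreasing, positive down to position $N := \max\{i : a_i > 0\}$, and satisfy the sequential congruences, with smallest part $\lambda_N = N a_N \equiv 0\ (\operatorname{mod}\ N)$). In particular $\operatorname{lg}(\lambda) = \lambda_1 = \sum_{j \geq 1} j a_j$.

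With this dictionary the summand transforms cleanly. I would rewrite the general term as
\begin{equation*} q^{\operatorname{lg}(\lambda)} \prod_{i \geq 1} f(i)^{(\lambda_i - \lambda_{i+1})/i} = q^{\sum_{i \geq 1} i a_i} \prod_{i \geq 1} f(i)^{a_i} = \prod_{i \geq 1} \bigl( f(i) q^i \bigr)^{a_i}, \end{equation*}
so that summing over $\mathcal S$ is the same as summing over all finitely supported $(a_i)$. Factoring this multiple sum over the independent indices $i$ and evaluating each geometric series then yields
\begin{equation*} \sum_{\lambda \in \mathcal S} q^{\operatorname{lg}(\lambda)} \prod_{i \geq 1} f(i)^{(\lambda_i - \lambda_{i+1})/i} = \prod_{i \geq 1} \sum_{a = 0}^{\infty} \bigl( f(i) q^i \bigr)^{a} = \prod_{i \geq 1} \bigl(1 - f(i) q^i\bigr)^{-1}, \end{equation*}
which is the desired product.

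The main obstacle is not combinatorial but analytic: rearranging the sum over sequences into a product of geometric series and summing those series is exactly the textbook expansion of the infinite product, but the interchange of summation and multiplication must be licensed, and this is where the hypothesis of absolute convergence is used in an essential way. As a cross-check, and an alternative proof, one can instead route through \eqref{Czeta_thm}: under the bijection $\pi$ of Theorem \ref{Cthm1} one has $a_i = \mu_i - \mu_{i+1}$ for $\mu = \pi^{-1}(\lambda)$, and since $\mu_i - \mu_{i+1}$ is the multiplicity $m_i(\mu^{*})$ of $i$ in the conjugate partition $\mu^{*}$, applying conjugation (a size-preserving involution of $\mathcal P$) turns the sum over $\mathcal S$ into the right-hand side of \eqref{Czeta_thm}.
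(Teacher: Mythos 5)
Your proof is correct, but it takes a genuinely different route from the paper's. The paper derives Theorem \ref{Cthm2} by invoking Corollary 2.9 of \cite{Schneider_zeta} --- a nested $(n-1)$-tuple sum for the coefficients of a general multi-product --- then specializing each $T_j$ to the singleton $\{j\}$ and each $f_j$ to $f$, observing that a tuple $(k, k_2, \dots, k_n)$ contributes only when the divisibility conditions $j \mid (k_j - k_{j+1})$ hold, i.e.\ only when the $k_j$ are the parts of a sequentially congruent partition, and finally letting $n \to \infty$. You instead parametrize $\mathcal S$ directly by the difference quotients $a_i = (\lambda_i - \lambda_{i+1})/i$, verify that this is a bijection onto finitely supported sequences of non-negative integers (the key points being that the congruences make each $a_i$ integral, monotonicity makes it non-negative, and $\lambda_r \equiv 0 \ (\operatorname{mod}\ r)$ with $\lambda_r \geq 1$ forces $a_r \geq 1$ so the support determines the length), and then factor the resulting multiple sum into geometric series. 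Your argument is self-contained and more elementary --- it is essentially the textbook expansion of the infinite product, transported to $\mathcal S$ by an explicit change of variables --- and it makes transparent that Theorem \ref{Cthm2} is equation \eqref{Czeta_thm} in disguise; your closing cross-check via $\pi$ and conjugation makes that equivalence precise and correctly computes $\lambda_i - \lambda_{i+1} = i(\mu_i - \mu_{i+1})$. What the paper's route buys in exchange is that it situates the theorem inside the general framework of \cite{Schneider_zeta}, from which the variant for distinct parts (the ``$+$'' sign case mentioned in the subsequent remark) falls out with no extra work. Both proofs lean equally on the hypothesis of absolute convergence to justify the rearrangement, and you are right to flag that as the only analytic point.
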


%
%

\begin{proof}[Proof of Theorem \ref{Cthm2}]
For $j=1,2,3,...$, let $\mathcal P_{T_j}$ denote partitions whose parts are all in some subset $T_j\subseteq \mathbb N$, with $\emptyset \in \mathcal P_{T_j}$ for all $j$, and define $f_j: T_j \to \mathbb C$. To prove Theorem \ref{Cthm2}, we begin by recalling Corollary 2.9 of \cite{Schneider_zeta} in the case that ``$\pm$'' signs are set to minus: 
\[
\prod_{j=1}^{n}\prod_{k_j\in T_j}\left(1- f_j(k_j)q^{k_j}\right)^{- 1}
=\sum_{k=0}^{\infty}c_k q^k,
\]
with the coefficients $c_k$ given by the somewhat unwieldy $(n-1)$-tuple sum
\begin{multline*}
c_k= \sum_{k_2=0}^{k}\sum_{k_3=0}^{k_2}\dots\sum_{k_{n}=0}^{k_{n-1}}\left(\sum_{\substack{\lambda\vdash k_{n}\\ \lambda\in\mathcal P_{T_{n}}}}\prod_{\lambda_i\in\lambda}f_{n}(\lambda_i)\right)\left(\sum_{\substack{\lambda\vdash (k_{n-1}-k_{n})\\ \lambda\in\mathcal P_{T_{n-1}}}}\prod_{\lambda_i\in\lambda}f_{n-1}(\lambda_i)\right)\\
\times\left(\sum_{\substack{\lambda\vdash (k_{n-2}-k_{n-1})\\ \lambda\in\mathcal P_{T_{n-2}}}}\prod_{\lambda_i\in\lambda}f_{n-2}(\lambda_i)\right)\dots\left(\sum_{\substack{\lambda\vdash (k-k_2)\\ \lambda\in\mathcal P_{T_{1}}}}\prod_{\lambda_i\in\lambda}f_1(\lambda_i)\right),
\end{multline*}
where ``$\lambda \vdash r$'' indicates $\lambda$ is a partition of $r$ and the interior products are taken over the parts $\lambda_i$ of each $\lambda$, which identity can be proved from (\ref{Czeta_thm}) by repeated application of the Cauchy product formula. 

Now, for every $j\in \mathbb N$ take $T_j=\{j\}$ and fix $f_j= f$. In this case, $\lambda \in \mathcal P_{T_j}$ means if $\lambda \neq \emptyset$ that $\lambda=(j,j,...,j)$, so we must have $j|(k_j - k_{j+1})$ in any nonempty partition sum on the right side above. Then every summand comprising $c_k$ 
vanishes unless all the $k_i\leq k$ are parts of a sequentially congruent partition having length $\leq n$: 
each sum over partitions is empty (i.e., equal to zero) if $j$ does not divide $k_j-k_{j+1}$; is equal to 1 if $k_j - k_{j+1}=0$ as then $\lambda=\emptyset$ and $\prod_{\lambda_i\in \emptyset}$ is an empty product; or else has one term $f(j)^{m_j}=f(j)^{(k_j-k_{j+1})/j}$ as there is exactly one $\lambda = (j,j,...,j)$ with $|\lambda|=m_j j=k_j-k_{j+1}> 0$. Finally, let $n \to \infty$ so this argument encompasses partitions in $\mathcal S$ of unrestricted length.
\end{proof}

\begin{remark}
We note that setting $f=1$, then comparing equation (\ref{Czeta_thm}) to Theorem \ref{Cthm2}, gives another proof of Theorem \ref{Cthm1}: the sets $\mathcal S_{\operatorname{lg}=n}$ and $\mathcal P_n$ (and thus, the sets $\mathcal S$ and $\mathcal P$) have the same product generating function. \end{remark}

\begin{remark} If we instead take every $\pm$ equal to plus in Corollary 2.9 of \cite{Schneider_zeta}, similar arguments reveal there is also a bijection between partitions into {\it distinct} parts and the subset of $\mathcal S$ containing partitions into parts with differences $\lambda_i - \lambda_{i+1}=i$ exactly.
\end{remark}

\section{Cyclic Sequentially Congruent Maps}\label{Sect3}
Comparing Theorem \ref{Cthm2} with (\ref{Czeta_thm}) above, we have two formally different-looking decompositions of the coefficients of $\prod_{n\geq1}(1-f(n)q^n)^{-1}$ as sums over partitions of the form 
$\sum_{\lambda \in \mathcal P_n}$ and $\sum_{\lambda \in \mathcal S_{\text{lg}=n}}$, yet one observes the summands in each case consist of 
the same terms 
in different orders. 
Then one wonders: precisely {\it which} partition $\gamma \in \mathcal P_n$ is such that \begin{equation}\label{prodeq} \prod_{i \geq 1}f(i)^{(\phi_i-\phi_{i+1})/i}=\prod_{j \geq 1}f(j)^{m_j(\gamma)}\end{equation} for a given $\phi \in \mathcal S_{\text{lg}=n}$?
One observes that $\gamma$ is generally {not} the same partition $\lambda=\pi^{-1}(\phi)$ as in \eqref{piinv}. 

Evidently the set $\mathcal S$ enjoys a second map to $\mathcal P$ (apart from $\pi^{-1}$).   
Let $$\sigma\colon \mathcal S \to \mathcal P$$ denote this map. 
We can write $\sigma$ down by comparing the forms of the products in \eqref{prodeq}: 
$$\sigma(\phi):=(1^{\phi_1-\phi_{2}}\  2^{(\phi_2-\phi_{3})/2}\  3^{(\phi_3-\phi_{4})/3}...)=\gamma\in \mathcal P_n,$$
where $\phi \in \mathcal S_{\text{lg}=n}$ as above. For example, $\sigma(5,3,3)=(1^{5-3}\  2^{(3-3)/2}\  3^{(3-0)/3})=(3,1,1)$.

Under this map we have $\sigma(\mathcal S_{\text{lg}=n})=\mathcal P_n$, thus the composite map is 
$$\sigma \circ \pi\colon \mathcal P_n \to \mathcal P_n,$$ 
and, similarly, we have the map $\pi  \circ \sigma\colon \mathcal S_{\text{lg}=n} \to \mathcal S_{\text{lg}=n}.$ 

A natural question to ask is: what kind of permutation structure arises as we alternately compose $\pi, \sigma$, that is, 
what if we apply $\sigma \circ \pi \circ \sigma \circ \pi  \circ \cdots \circ  \sigma \circ \pi$ to a partition of $n$? 
For a concrete example, let's check by repeatedly applying $\sigma \circ \pi   \circ \cdots \circ  \sigma \circ \pi$ to the partitions of $n=4$: 
$$(4)\overset{\pi}\longmapsto (4)\overset{\sigma}\longmapsto (1,1,1,1)\overset{\pi}\longmapsto (4,4,4,4)\overset{\sigma}\longmapsto (4),$$
$$(3,1)\overset{\pi}\longmapsto (4,2)\overset{\sigma}\longmapsto (2,1,1)\overset{\pi}\longmapsto (4,3,3)\overset{\sigma}\longmapsto (3,1),$$
$$(2,2)\overset{\pi}\longmapsto (4,4)\overset{\sigma}\longmapsto (2,2),$$
$$(2,1,1)\overset{\pi}\longmapsto (4,3,3)\overset{\sigma}\longmapsto (3,1)\overset{\pi}\longmapsto (4,2)\overset{\sigma}\longmapsto (2,1,1),$$
$$(1,1,1,1)\overset{\pi}\longmapsto (4,4,4,4)\overset{\sigma}\longmapsto (4)\overset{\pi}\longmapsto (4)\overset{\sigma}\longmapsto (1,1,1,1).$$
There appears to be cyclic behavior of order 1 or 2; also evident is the following fact.

\begin{theorem}\label{Cthm3}
The composite map $\sigma \circ \pi\colon \mathcal P_n \to \mathcal P_n$ takes partitions to their conjugates. 
\end{theorem}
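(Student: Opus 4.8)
The plan is to compute the composite $\sigma \circ \pi$ explicitly and match the outcome against the multiplicity description of the conjugate partition. Fix $\lambda = (\lambda_1, \ldots, \lambda_r) \in \mathcal P_n$ and set $\phi = \pi(\lambda)$, so that by \eqref{Cconstruction1} the parts are $\phi_i = i\lambda_i + \sum_{j=i+1}^{r}\lambda_j$. Since $\sigma$ produces the partition whose multiplicity of $i$ is $(\phi_i-\phi_{i+1})/i$, the quantity controlling everything is the successive difference $\phi_i - \phi_{i+1}$.

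First I would carry out this subtraction. Substituting the formula for $\phi_i$ and $\phi_{i+1}$, the tail sums $\sum_{j=i+2}^{r}\lambda_j$ cancel, leaving $\phi_i - \phi_{i+1} = i\lambda_i + \lambda_{i+1} - (i+1)\lambda_{i+1} = i(\lambda_i - \lambda_{i+1})$. Dividing by $i$ shows that the multiplicity of $i$ in $\sigma(\phi)$ is exactly $\lambda_i - \lambda_{i+1}$. The convention $\lambda_k = 0$ for $k > \ell(\lambda)$ handles the largest index cleanly: the smallest part of $\phi$ contributes multiplicity $\lambda_r - \lambda_{r+1} = \lambda_r$, which agrees with $\phi_r/r = \lambda_r$ from \eqref{Cconstruction1.5}.

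Next I would invoke the standard characterization of conjugation: writing $\lambda^{*}$ for the conjugate of $\lambda$, the multiplicity of $i$ in $\lambda^{*}$ equals $\lambda_i - \lambda_{i+1}$ for every $i$. This is immediate from the Ferrers diagram, since the number of columns of height exactly $i$ in the diagram of $\lambda$ is $\lambda_i - \lambda_{i+1}$, and columns become rows under transposition. Comparing with the previous paragraph, $\sigma(\pi(\lambda))$ and $\lambda^{*}$ have identical multiplicities in every part, hence are the same partition, which is the assertion of the theorem.

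Since the argument is a short direct computation, there is no serious obstacle; the only points needing a little care are the bookkeeping at the endpoint (checking the cancellation behaves correctly for the smallest part, via the convention $\lambda_k = 0$ for $k > \ell(\lambda)$) and citing the \emph{multiplicity} form of the conjugate rather than its more familiar ``count the parts $\geq i$'' description. A sanity check against the worked example $(3,1)\overset{\pi}\mapsto(4,2)\overset{\sigma}\mapsto(2,1,1)$, whose output is indeed the conjugate of $(3,1)$, confirms the computation.
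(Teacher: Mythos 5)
Your proof is correct and follows the same strategy as the paper's: compare the part-multiplicities of $\sigma(\pi(\lambda))$ with those of the conjugate $\lambda^*$. In fact you carry out explicitly the computation $\phi_i-\phi_{i+1}=i(\lambda_i-\lambda_{i+1})$ and the identification $m_i(\lambda^*)=\lambda_i-\lambda_{i+1}$ that the paper (which phrases conjugation in the distinct-parts-with-frequencies notation) leaves to the reader, so your write-up is if anything more complete.
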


\begin{proof} 
If we write 
\begin{equation*}\label{Ceq1}
\lambda=(a_1^{m_{a_1}}a_2^{m_{a_2}}a_3^{m_{a_3}}...\  a_r^{m_{a_r}}),\  a_1>a_2>...>a_r\geq 1,
\end{equation*}
then we can compute the parts and frequencies of the conjugate partition $$\lambda^*=(b_1^{m_{b_1}}b_2^{m_{b_2}}b_3^{m_{b_3}}...\  b_s^{m_{b_s}}),\  b_1>b_2>...>b_s\geq 1,$$ directly from the parts and frequencies of $\lambda$ 
by comparing the Ferrers-Young diagrams of $\lambda,\lambda^*$. 
The conjugate partition $\lambda^*$ has  largest part $b_1$ given by
\begin{equation}\label{Cconjugate1}
b_1=\ell(\lambda)=m_{a_1}+m_{a_2}+...+m_{a_r},\  \  \text{with}\  \  m_{b_1}(\lambda^*)=a_r,
\end{equation}
and for $1<i\leq s$, the parts and their frequencies are given by
\begin{equation}\label{Cconjugate2}
b_i=m_{a_1}+m_{a_2}+...+m_{a_{r-i+1}},\  \  \  \  m_{b_i}(\lambda^*)=a_{r-i+1}-a_{r-i+2}.
\end{equation}
Moreover, we have that $s=r$. 
%
The theorem results from using the definitions of the maps $\pi$ and $\sigma$, keeping track of the parts in the transformation $\lambda \mapsto (\sigma \circ  \pi)(\lambda)$, then comparing the parts of $(\sigma \circ  \pi )(\lambda)$ with the parts of $\lambda^*$ in \eqref{Cconjugate1} and \eqref{Cconjugate2} above to see they are the same.
\end{proof}

The preceding considerations also make explicit our observation above about cyclic orders. 
\begin{corollary}
We have that $(\sigma \circ  \pi)(\lambda)=\lambda$ when $\lambda$ is self-conjugate, and $(\sigma  \circ  \pi)^2(\lambda)=\lambda$ holds for all $\lambda\in\mathcal P$. Likewise, for $\phi$ sequentially congruent it is the case that $(\pi  \circ \sigma)(\phi)=\phi$ when $\sigma(\phi)$ is self-conjugate, and $(\pi  \circ \sigma)^2(\phi)=\phi$ holds for all $\phi \in \mathcal S$. 
\end{corollary}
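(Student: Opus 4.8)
The plan is to derive everything as a consequence of Theorem \ref{Cthm3}, which identifies $\sigma \circ \pi$ with conjugation on $\mathcal P_n$. Writing $\lambda^*$ for the conjugate of $\lambda$, Theorem \ref{Cthm3} says $(\sigma \circ \pi)(\lambda) = \lambda^*$. The first claim, that $(\sigma \circ \pi)(\lambda) = \lambda$ precisely when $\lambda$ is self-conjugate, is then immediate: $\lambda^* = \lambda$ is the definition of self-conjugacy. The second claim, that $(\sigma \circ \pi)^2(\lambda) = \lambda$ for \emph{all} $\lambda \in \mathcal P$, follows from the fact that conjugation is an involution on partitions, i.e. $(\lambda^*)^* = \lambda$. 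Concretely, I would write
\[
(\sigma \circ \pi)^2(\lambda) = (\sigma \circ \pi)\big((\sigma \circ \pi)(\lambda)\big) = (\sigma \circ \pi)(\lambda^*) = (\lambda^*)^* = \lambda,
\]
using Theorem \ref{Cthm3} twice together with the involutive property of conjugation. One small point to verify is that both applications stay within $\mathcal P_n$, which holds because $\sigma \circ \pi$ maps $\mathcal P_n$ to itself and conjugation preserves size.

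For the statements about $\pi \circ \sigma$ on $\mathcal S$, the natural route is to transport the results about $\sigma \circ \pi$ across the bijection $\pi$. Since $\pi \colon \mathcal P_n \to \mathcal S_{\mathrm{lg}=n}$ is a bijection with inverse $\pi^{-1}$, the maps $\sigma \circ \pi$ and $\pi \circ \sigma$ are conjugate to one another as self-maps (in the group-theoretic sense of conjugation by $\pi$): for $\phi \in \mathcal S_{\mathrm{lg}=n}$ with $\phi = \pi(\lambda)$, one has
\[
(\pi \circ \sigma)(\phi) = (\pi \circ \sigma)(\pi(\lambda)) = \pi\big((\sigma \circ \pi)(\lambda)\big) = \pi(\lambda^*).
\]
Thus $(\pi \circ \sigma)(\phi) = \phi$ holds if and only if $\pi(\lambda^*) = \pi(\lambda)$, and since $\pi$ is injective this is equivalent to $\lambda^* = \lambda$, i.e. to $\sigma(\phi) = \lambda$ being self-conjugate (using $\sigma \circ \pi = \mathrm{id}$-after-conjugation to identify $\sigma(\phi)$ with $\lambda^*$; one checks $\sigma(\pi(\lambda)) = \lambda^*$ is exactly Theorem \ref{Cthm3}, so the self-conjugacy of $\sigma(\phi)$ is the same condition as the self-conjugacy of $\lambda$). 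The involution claim $(\pi \circ \sigma)^2(\phi) = \phi$ then follows the same way: $(\pi \circ \sigma)^2(\phi) = \pi\big((\sigma \circ \pi)^2(\lambda)\big) = \pi(\lambda) = \phi$, using the already-established fact that $(\sigma \circ \pi)^2 = \mathrm{id}$ on $\mathcal P_n$.

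I do not anticipate a serious obstacle here, since the corollary is essentially formal once Theorem \ref{Cthm3} is in hand. The only genuine input beyond that theorem is the classical fact that conjugation of partitions is an involution, which I would invoke without proof. The mild subtlety worth stating carefully is the equivalence between ``$\sigma(\phi)$ is self-conjugate'' and the fixed-point condition for $\pi \circ \sigma$; I would make this precise by noting $\sigma(\phi) = (\sigma \circ \pi)(\pi^{-1}(\phi)) = (\pi^{-1}(\phi))^*$, so that $\sigma(\phi)$ being self-conjugate is equivalent to $\pi^{-1}(\phi)$ being self-conjugate, which by the first half of the argument is exactly the fixed-point condition. Assembling these observations gives all four assertions of the corollary in a few lines.
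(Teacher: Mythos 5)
Your proposal is correct and matches the paper's (implicit) argument: the paper offers no separate proof, simply noting that the corollary follows from "the preceding considerations," i.e., from Theorem \ref{Cthm3} identifying $\sigma\circ\pi$ with conjugation, combined with the fact that conjugation is an involution. Your careful transport of the statements to $\mathcal S$ via the bijection $\pi$, including the identification $\sigma(\phi)=(\pi^{-1}(\phi))^*$, is exactly the intended reasoning, spelled out in full.
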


\begin{remark}
Interestingly, the map $\pi  \circ \sigma:\mathcal S_{\operatorname{lg}=n}\to \mathcal S_{\operatorname{lg}=n}$ defines a duality analogous to conjugation in $\mathcal P_n$, that instead connects partitions $\phi $ and $(\pi \circ \sigma)(\phi)$ in $\mathcal S_{\operatorname{lg}=n}$. For instance, from the above examples, it is the case in $\mathcal P_4$ that $(2,1,1)$ and $(3,1)=(\sigma \circ  \pi) (2,1,1)$ are conjugates, while on the same row, $(4,3,3)$ and $(4,2)=(\pi \circ  \sigma) (4,3,3)$ are paired under this new, analogous duality in $\mathcal S_{\operatorname{lg}=4}$.\end{remark}



\section{Frequency Congruent Partitions and Infinite Families of Bijections}\label{Sect4}
The conjugates of sequentially congruent partitions are themselves interesting combinatorial objects.
\begin{theorem}\label{multcongrthm}
A sequentially congruent partition $\phi$ 
is mapped by conjugation to a partition $\phi^*$ whose frequencies $m_i=m_i(\phi^*)$ obey the congruence condition \begin{equation*}
m_i\equiv 0\  (\operatorname{mod}\  i). \end{equation*}
Conversely, any partition with parts obeying this congruence condition has a sequentially congruent partition as its conjugate. 
\end{theorem}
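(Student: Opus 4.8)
The plan is to reduce the entire statement to one elementary identity relating the frequencies of a partition to the gaps between consecutive parts of its conjugate. Specifically, for any partition $\mu$ the frequency of $i$ in $\mu^*$ equals $\mu_i-\mu_{i+1}$, where as usual $\mu_k:=0$ for $k>\ell(\mu)$; equivalently, since conjugation is an involution, $m_i(\mu)=\mu^*_i-\mu^*_{i+1}$. This is read directly off the Ferrers--Young diagram, and indeed it is already encoded in the part-and-frequency formulas \eqref{Cconjugate1}--\eqref{Cconjugate2} underlying the proof of Theorem~\ref{Cthm3}. First I would record the companion observation that sequential congruence of $\phi=(\phi_1,\dots,\phi_r)$ is exactly the assertion that $i\mid(\phi_i-\phi_{i+1})$ for every $i\geq 1$: the congruences $\phi_i\equiv\phi_{i+1}\pmod{i}$ account for $1\leq i\leq r-1$, the smallest-part clause $\phi_r\equiv 0\pmod{r}$ is the case $i=r$ under the convention $\phi_{r+1}=0$, and every $i>r$ holds vacuously since then $\phi_i=\phi_{i+1}=0$.

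For the forward direction I would apply the first form of the identity with $\mu=\phi^*$, so that $\mu^*=\phi$ and hence $m_i(\phi^*)=\phi_i-\phi_{i+1}$. Sequential congruence of $\phi$ gives $i\mid(\phi_i-\phi_{i+1})$, which is precisely $m_i(\phi^*)\equiv 0\pmod{i}$, the claimed frequency-congruence condition. For the converse I would start from a partition $\psi$ with $m_i(\psi)\equiv 0\pmod{i}$ for all $i$ and apply the second form directly: $m_i(\psi)=\psi^*_i-\psi^*_{i+1}$, so the hypothesis forces $\psi^*_i\equiv\psi^*_{i+1}\pmod{i}$ for every $i$, which by the observation above says exactly that $\psi^*$ is sequentially congruent. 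Since conjugation is an involution, writing $\psi=(\psi^*)^*$ then exhibits $\psi$ as the conjugate of the sequentially congruent partition $\psi^*$, completing the argument.

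The only real care needed --- and the step I expect to be the main (if minor) obstacle --- is the boundary bookkeeping: confirming that the ``smallest part $\equiv 0$ modulo the length'' clause together with the zero-padding convention makes sequential congruence genuinely equivalent to ``$i\mid(\phi_i-\phi_{i+1})$ for all $i\geq 1$'', and checking that the indices where the differences $\phi_i-\phi_{i+1}$ vanish match the parts $\phi^*$ actually possesses (the largest nonzero difference occurs at $i=r=\ell(\phi)$, which is the largest part of $\phi^*$, so the congruences $m_i\equiv 0\pmod{i}$ hold trivially for $i>r$). Once this alignment is verified, both implications are immediate, and no computation beyond the single conjugation identity is required.
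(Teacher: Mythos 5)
Your proof is correct and follows the same route as the paper, which simply asserts the result is ``immediate by conjugation of the relevant Young diagrams''; your identity $m_i(\mu^*)=\mu_i-\mu_{i+1}$ and the reformulation of sequential congruence as $i\mid(\phi_i-\phi_{i+1})$ for all $i\geq 1$ are exactly the details that one-line proof leaves implicit.
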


\begin{proof} The theorem is immediate by conjugation of the relevant Young diagrams.
\end{proof}
Let us codify the objects highlighted in the preceding theorem.

\begin{definition} 
We define a partition to be {\it frequency congruent} if it has the property that each part divides its frequency\footnote{As in Theorem \ref{multcongrthm}}. \end{definition}

Then Theorem \ref{multcongrthm} implies the following result.

\begin{corollary}\label{multcor}
Frequency congruent partitions of length $n$ are in bijection with the partitions of $n$, viz. $$\#\{\lambda \in \mathcal P : \ell(\lambda)=n, \  i | m_i(\lambda)\}\  =\  p(n).$$
\end{corollary}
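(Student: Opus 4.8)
The plan is to deduce Corollary \ref{multcor} directly from the bijections already in hand, chaining them together so that almost no new work is required. The key observation is that we have two bijections pointing at frequency congruent partitions from different directions: Theorem \ref{Cthm1} gives a size-preserving correspondence $\pi\colon \mathcal P_n \to \mathcal S_{\operatorname{lg}=n}$, and Theorem \ref{multcongrthm} identifies conjugation as a bijection between $\mathcal S$ and the frequency congruent partitions. So first I would set up notation: let $\mathcal F_n := \{\lambda \in \mathcal P : \ell(\lambda)=n,\ i \mid m_i(\lambda)\}$ denote the set whose cardinality we wish to compute, and let $\mathcal S_{\operatorname{lg}=n}$ be as in Theorem \ref{Cthm1}.

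Next I would verify that conjugation restricts to a bijection $\mathcal S_{\operatorname{lg}=n} \to \mathcal F_n$. Conjugation is always an involution on $\mathcal P$, hence injective, so the only content is to check it sends $\mathcal S_{\operatorname{lg}=n}$ onto $\mathcal F_n$. Theorem \ref{multcongrthm} already tells us that the conjugate $\phi^*$ of any sequentially congruent $\phi$ is frequency congruent, and conversely that every frequency congruent partition is the conjugate of a sequentially congruent one; what remains is to track the single numerical statistic relating the two sides. Under conjugation the largest part and the length swap roles: $\operatorname{lg}(\phi)=\ell(\phi^*)$ and $\ell(\phi)=\operatorname{lg}(\phi^*)$. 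Hence $\phi$ has largest part $n$ if and only if $\phi^*$ has length $n$, which is exactly the condition cutting $\mathcal F_n$ out of the frequency congruent partitions. This pins down conjugation as a bijection $\mathcal S_{\operatorname{lg}=n} \to \mathcal F_n$.

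Finally I would compose. Theorem \ref{Cthm1} gives $\#\mathcal S_{\operatorname{lg}=n} = p(n)$, and the conjugation bijection just established gives $\#\mathcal F_n = \#\mathcal S_{\operatorname{lg}=n}$. Chaining these yields $\#\mathcal F_n = p(n)$, and indeed the explicit bijection $\mathcal P_n \to \mathcal F_n$ realizing the count is $\lambda \mapsto (\pi(\lambda))^*$, i.e. apply the construction of Theorem \ref{Cthm1} and then conjugate. This is the content of the corollary.

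I do not expect any genuine obstacle here, since every ingredient is a previously stated result; the corollary is essentially a bookkeeping assembly. The one point demanding a moment of care is the statistic-swapping step in the second paragraph: one must confirm that the largest-part-equals-$n$ condition on the sequentially congruent side matches the length-equals-$n$ condition on the frequency congruent side under conjugation, rather than some other pairing of statistics. Once that correspondence of the governing statistic is checked against the formulas for the conjugate in \eqref{Cconjugate1} and \eqref{Cconjugate2}, the proof closes immediately.
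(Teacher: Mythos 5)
Your proposal is correct and matches the paper's primary argument, which likewise opens by deducing the corollary from Theorem \ref{multcongrthm} together with Theorem \ref{Cthm1}; your careful check that conjugation exchanges the largest-part-$=n$ and length-$=n$ statistics is exactly the bookkeeping needed to make that deduction precise. The paper additionally supplies two self-contained alternatives --- a direct combinatorial map sending $(1^{m_1}2^{m_2}\cdots)$ to $(1^{m_1}2^{2m_2}3^{3m_3}\cdots)$, and a generating-function argument letting $q\to 1$ in $\prod_{n\ge 1}(1-x^nq^{n^2})^{-1}$ --- but these are supplementary to the route you chose.
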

\vfill
\begin{proof} This statement follows from Theorem \ref{multcongrthm} together with Theorem \ref{Cthm1}. For a combinatorial proof, take any partition $\lambda=(1^{m_1} 2^{m_2}3^{m_3}...i^{m_i}...)$ of $n$, and multiply each $m_i$ by $i$ to yield a frequency congruent partition $(1^{ m_1}2^{2 m_2} 3^{3m_3}...i^{i m_i}...)$ with length $m_1+2 m_2 + 3 m_3 +...=|\lambda| = n$. Conversely, by the same principle, divide the frequency of each part of a length-$n$ frequency congruent partition by the part itself for a partition of $n$. 

Alternatively, we can prove the bijection using generating functions. 
For $|x|<1, |q|<1$, consider the following 
identities in light of \eqref{Czeta_thm} and \eqref{Czeta_thm3}: 
\begin{flalign*}
    &\prod_{n=1}^{\infty} \frac{1}{1 - x^{n}q^{n^{2}}} \\
    &= \   (1 + x^{1}q^{1} + x^{2}q^{1 + 1} +x^3 q^{1+1+1}+ ...) \\
    & \times (1 + x^2 q^{2 + 2} + x^4 q^{2 + 2 + 2 + 2} +x^6 q^{2+2+2 + 2 + 2 + 2}+ ...)  \\
    & \times (1 + x^3 q^{3 + 3 + 3}+ x^6 q^{3+3+3+3+3+3}+x^9 q^{3+3+3+3+3+3+3+3+3}+  ...) \times \cdots \\
    &=\sum_{\substack{\lambda\in\mathcal P \\   i | m_i(\lambda)}}x^{\ell(\lambda)}q^{|\lambda|}=\sum_{n=0}^{\infty}x^n \sum_{\substack{\ell(\lambda)=n \\   i | m_i(\lambda)}}q^{|\lambda|},
\end{flalign*}
where the final two (absolutely convergent) sums are taken over frequency congruent partitions.


To count the number of frequency congruent partitions of length $n$, let $q \to 1$ from within the unit the circle in the right-most series above, noting in the limit we still have convergence since $|x|<1$. Then by comparison with the product side of the generating function, the resulting coefficient of $x^{n}$ is equal to $p(n)$ by Euler's identity (see \cite{Andrews_theory}).
\end{proof}

 \begin{remark}
We note that the generating function proof above provides (by conjugation) another proof that $\#\mathcal S_{\text{lg}=n}=p(n)$. 
 \end{remark}

Indeed, the steps of the preceding proof suggest a highly general frequency congruence phenomenon yielding infinite families of partition bijections. 

As before, let $\mathcal P_T\subseteq \mathcal P$ be the set of partitions (including $\emptyset$) with parts from $T=\{t_1, t_2, t_3, ...\} \subseteq \mathbb N$; we allow $\mathcal P_T$ to also denote partitions with parts from a {sequence} $T$ of natural numbers if they are distinct. Let $p_T (n)$ denote the number of partitions of $n\geq 0$ in $\mathcal P_T$.
Moreover, for a sequence $S=(s_1, s_2, s_3,...)$ of natural numbers, define
\begin{flalign*}
\mathcal P_T(S):=\{\lambda\in \mathcal P_T : s_i | m_{t_i}\},  
\end{flalign*}
and let $\mathcal P_T(S,n)$ denote partitions in $\mathcal P_T(S)$ of length $n$. Thus $\mathcal P_{\mathbb N}((1,1,1,...))$ $=\mathcal P$ and $\#\mathcal P_{\mathbb N}((1,1,1,...),n)=p(n)$. Then we have the following.
\begin{theorem}\label{genthm}
Let $|x|< 1, |q|< 1$. For a sequence ${A}=(a_1, a_2, a_3,...)$ of natural numbers and subset $B=\{b_1, b_2, b_3, ...\}\subseteq \mathbb N$, we have 
%
$$ \prod_{n=1}^{\infty} \frac{1}{1 - x^{a_n}q^{a_n b_n}}\  =\  \sum_{\substack{\lambda\in\mathcal P_{B}(A)}}x^{\ell(\lambda)}q^{|\lambda|}\  =\  \sum_{n=0}^{\infty}x^n \sum_{\substack{\lambda\in\mathcal P_B(A,n)}}q^{|\lambda|}.$$
If the $a_i\in A$ are distinct then the sets $\mathcal P_A$ and $\mathcal P_B(A)$ are in bijection, and 
$$\#\mathcal P_B(A,n)\  =\  p_A(n).$$
\end{theorem}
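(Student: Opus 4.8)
The plan is to establish the product--sum identity first and then read off the bijection as a consequence. Starting from the infinite product, I would expand each factor $(1 - x^{a_n}q^{a_n b_n})^{-1}$ as a geometric series $\sum_{m\geq 0} x^{a_n m} q^{a_n b_n m}$. Here the exponent of $x$ records $a_n$ copies of the part $b_n$, so the multiplicity of $b_n$ being contributed is $a_n m$, a multiple of $a_n$; the exponent of $q$ records the total size contributed by those parts, namely $b_n \cdot (a_n m)$. I would observe that this is precisely the generating function mechanism behind \eqref{Czeta_thm} and \eqref{Czeta_thm3}, specialized so that the $n$th factor governs the part $b_n$ and forces its frequency to be divisible by $a_n$. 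Collecting one term from each factor and multiplying, a general summand corresponds to choosing, for each $n$, a nonnegative integer $m$, i.e.\ exactly to choosing a partition $\lambda$ whose part $b_n$ appears with multiplicity $m_{b_n} = a_n m \equiv 0 \ (\operatorname{mod} a_n)$. This is the defining condition for $\lambda \in \mathcal P_B(A)$, and the resulting monomial is $x^{\ell(\lambda)}q^{|\lambda|}$ since $\ell(\lambda) = \sum_n m_{b_n}$ and $|\lambda| = \sum_n b_n m_{b_n}$.

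Having matched the middle expression, the right-hand equality is merely a regrouping of the same sum according to length $\ell(\lambda)=n$, so I would remark that it requires nothing beyond absolute convergence to interchange and reindex; I would justify absolute convergence from $|x|<1,|q|<1$ exactly as in the proof of Corollary \ref{multcor}. The key subtlety to flag is the role of the hypothesis that the $a_i$ are \emph{distinct}: only under this hypothesis is the correspondence $n \leftrightarrow b_n$ a genuine bookkeeping of parts without collision, so that each factor of the product controls a single part and the series legitimately sums over honest partitions rather than over multisets with ambiguous frequency data.

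For the bijection itself, I would argue exactly as in the combinatorial proof of Corollary \ref{multcor}. Given $\lambda \in \mathcal P_A$ with frequencies $m_{a_i}(\lambda)$, I would send it to the partition in $\mathcal P_B(A)$ having part $b_i$ with multiplicity $a_i \cdot m_{a_i}(\lambda)$; since $a_i \mid a_i m_{a_i}(\lambda)$, the image indeed lies in $\mathcal P_B(A)$. Conversely, given $\mu \in \mathcal P_B(A)$, each frequency $m_{b_i}(\mu)$ is divisible by $a_i$, so dividing yields a well-defined partition in $\mathcal P_A$ with part $a_i$ of multiplicity $m_{b_i}(\mu)/a_i$. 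These two maps are mutually inverse, giving the bijection $\mathcal P_A \leftrightarrow \mathcal P_B(A)$. The count $\#\mathcal P_B(A,n)=p_A(n)$ then follows by taking the coefficient of $x^n$: alternatively, and more in the spirit of the generating function argument, I would let $q\to 1^-$ in the innermost series (convergence is preserved because $|x|<1$) and compare with the product side, which collapses to $\prod_n (1-x^{a_n})^{-1}=\sum_n p_A(n)x^n$, the generating function for $\mathcal P_A$.

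The main obstacle is not any single hard computation but rather pinning down precisely where the distinctness of the $a_i$ is needed and where it is not. The product--sum identity and the length-grading hold regardless, since the expansion never requires the $a_i$ to be distinct; it is only the final clean bijection and the equality with $p_A(n)$ that demand distinctness, because repeated values of $a_i$ would make the assignment $a_i \mapsto$ (part with multiplicity $a_i m_{a_i}$) ill-defined as a statement about a fixed partition. I would therefore take care to separate the unconditional analytic identity from the conditional combinatorial bijection, mirroring the two-tiered statement of the theorem.
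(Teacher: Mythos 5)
Your proposal is correct and follows essentially the same route as the paper's proof: expanding each factor as a geometric series in which the exponent of $x$ records $a_n$-fold multiples of the frequency of $b_n$, then obtaining the enumeration both by the frequency-scaling bijection $m_{a_i}\mapsto a_i m_{a_i}$ and by letting $q\to 1$ to collapse the product to $\prod_n(1-x^{a_n})^{-1}$. Your closing observation about exactly where distinctness of the $a_i$ is needed is a sound (and slightly more explicit) gloss on the paper's argument.
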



We note that equation \eqref{Czeta_thm3} represents the case $a_i=1, b_i=i$, and the generating function in the proof of Corollary \ref{multcor} is the  case $a_i=b_i=i$.

\begin{proof}
For the first identity, much as in the proof of Corollary \ref{multcor}, for $|x|<1, |q|<1$, rewrite the infinite product on the left side of Theorem \ref{genthm} as a product of geometric series:
\begin{flalign*}
    \prod_{n=1}^{\infty}\left(1+x^{ a_n} q^{b_n+b_n+...+b_n} +x^{2 a_n} q^{b_n+...+b_n} +x^{3 a_n} q^{b_n+...+b_n} +...\right),
   \end{flalign*}
where in each term $x^{i a_n}q^{b_n+...+b_n}$ there are $ia_n$ repetitions of $b_n$ in the exponent of $q$. Expanding the product immediately gives the first equality, and collecting coefficients of $x^n$ gives the right-most equality.

To prove the second identity in the theorem, just as in the proof of Corollary \ref{multcor}, let $q\to 1$ from within the unit circle in the right-most summation of the first identity. 
But if the $a_i$ are distinct the infinite product becomes
$$ \prod_{n=1}^{\infty} \frac{1}{1 - x^{a_n}}=\prod_{n\in A} \frac{1}{1 - x^{n}}=\sum_{n=0}^{\infty}p_A(n)x^n.$$
Equating coefficients of $x^n$ completes the proof. 

One can also prove the second identity by mapping every partition $(a_1^{m_{a_1}}a_2^{m_{a_2}}a_3^{m_{a_3}}...)\in \mathcal P_A$ of size $n$ (noting these $a_i$ are not necessarily in increasing order) to partition $(b_1^{a_1m_{a_1}}b_2^{a_2m_{a_2}}b_3^{a_3m_{a_3}}...)\in \mathcal P_B(A,n)$ and, conversely, mapping each $(b_1^{a_1 n_1}b_2^{a_2n_{2}}b_3^{a_3n_{3}}...)\in \mathcal P_B(A,n)$ to $(a_1^{n_{1}}a_2^{n_{2}}a_3^{n_{3}}...)$.\footnote{There is a resemblance here to maps generating other classes of partitions with nontrivial weightings on the frequencies, e.g. see \cite{Alladi, Uncu, Dousse} regarding identities of Capparelli and Primc. } 
\end{proof}

Observe that in the above notation, frequency congruent partitions represent the set $\mathcal P_{\mathbb N}\left( (1,2,3,4,...) \right)$. Recalling that the conjugates of frequency congruent partitions are sequentially congruent, then the set $\mathcal S_B(A)$ of {\it conjugates of partitions in $\mathcal P_B(A)$} is evidently an analog of the set $\mathcal S$. For example, for $B=\mathbb N$ and sequence $A$, the conjugates of the set of partitions $\mathcal P_{\mathbb N}(A)$ such that $a_i$ divides $m_i$ have a nice sequential congruence property:
\begin{equation}\mathcal S_{\mathbb N}(A)=\{\lambda \in \mathcal P : \lambda_i \equiv \lambda_{i+1}\  (\text{mod}\  a_i)\}.\end{equation}
We conjecture there are bijective maps in this extended regime analogous to those in Sections \ref{Sect2} and \ref{Sect3} above; however, they alternate between $\mathcal P_A$ and $\mathcal S_B(A)$ under composition 
instead of between $\mathcal P$ and $\mathcal S$.

\begin{remark}
For $T\subseteq \mathbb N$ and $f,g:T\to \mathbb C$, two-variable generating functions of the general form $\prod_{n\in T}(1-x^{f(n)}q^{g(n)})^{-1}$ used in this section are flexible analytic and combinatorial objects (see \cite{Andrews_theory, Fine}). We note if $0<x<e^{-1}, |q|<1, 1 \not\in T$, taking $f(n)=\operatorname{log}\  n $ and letting $q\to 1$ as we did above yields a class of ``partition zeta functions'' studied in \cite{ORS, Schneider_zeta}: 
$$\lim_{q\to 1}\prod_{n\in T}(1-x^{\operatorname{log} n}q^{g(n)})^{-1}=\prod_{n\in T}(1-n^{\operatorname{log} x})^{-1}=\sum_{\lambda \in \mathcal P_T}{N(\lambda)^{-s}},$$
where $s:= -\operatorname{log} x$, thus $s>1$ for convergence, and $N(\lambda):=\prod_{\lambda_i \in \lambda}\lambda_i$. (By the same token, one may rewrite the Riemann zeta function as $\zeta(s)=\zeta(-\log x)=\sum_{n=1}^{\infty}x^{\log n}$.) 
\end{remark}

\section{Further Thoughts: Partition Ideals}

In a series of papers in the 1970s (e.g. see \cite{Andrews_computers, Andrews_computers2}), G. E. Andrews developed a {\it theory of partition ideals} which uses ideas from lattice theory to unify and extend many classical results on generating functions and partition bijections, summarized in Chapter 8 of \cite{Andrews_theory}. 
\begin{definition}\label{idealdef}
A partition ideal is a subset $\mathcal C \subseteq \mathcal P$ with the property that if any parts are deleted from a partition in $\mathcal C$, the resulting partition is an element of $\mathcal C$ as well. 
\end{definition}

\begin{remark}
We note Andrews's definition is stated in terms of frequencies.
\end{remark}

For example, partitions into distinct parts form a partition ideal. 
Andrews identifies relations between partition ideals which break the set $\mathcal P$ into algebraic subclasses.
\begin{definition}\label{equivdef}
We say two partition ideals $\mathcal C, \mathcal C'$ are equivalent and write $\mathcal C \sim \mathcal C'$ if $\#\{\lambda \in \mathcal C : |\lambda|=n\}=\#\{\lambda \in \mathcal C' : |\lambda|=n\}$
for all $n\geq 1$.\end{definition}

Andrews carries out the study of equivalences where one subset $\mathcal C$ is a partition ideal of ``order one'' in great detail (see \cite{Andrews_theory} for specifics). 
These are ``nice'' subsets of $\mathcal P$ including many of interest classically, e.g., partitions into distinct parts form a partition ideal of order one. 
Sets $\mathcal P_A, \mathcal P_B$ 
as in Theorem \ref{genthm} are also partition ideals of order one. 
Naturally, then, one wonders if Andrews's theory extends in some way to sets like $\mathcal P_B(A)$. 

A moment's thought convinces one that such sets are {not} generally partition ideals. However, they do enjoy a tantalizing ``quasi-ideal'' property: 
{\it If $a_i$ copies (or a multiple thereof) of any part $b_i$ are deleted from a partition in $\mathcal P_B(A)$, the resulting partition is an element of $\mathcal P_B(A)$ as well.} %

This feels like a refinement of Definition \ref{idealdef}. Furthermore, if the  $a_i$ in the sequence $A$ of distinct terms are rearranged to form a new sequence $A'$ (the same terms in a different order), clearly $p_{A'}(n)=p_A(n)$ even though $\mathcal P_B(A')\neq \mathcal P_B(A)$; thus Theorem \ref{genthm} gives \begin{equation}\label{eqA} \#\mathcal P_B(A,n)=\#\mathcal P_B(A',n).\end{equation}
Similarly, noting $B$ is arbitrary in Theorem \ref{genthm} and could be replaced by another subset $B'\subseteq\mathbb N$ without changing the right side of the second identity, then \begin{equation}\label{eqB}\#\mathcal P_B(A,n)=\#\mathcal P_{B'}(A,n).\end{equation}
In light of the correspondence between length-$n$ partitions in $\mathcal P_B(A)$ and {size}-$n$ partitions in $\mathcal P_A$, equations \eqref{eqA} and \eqref{eqB} feel similar to partition ideal equivalence in Definition \ref{equivdef}. 

Moreover, the two-variable generating functions in Section \ref{Sect4} are of a similar shape to Andrews's formulas for ``linked partition ideals'' in Chapter 8.4 of \cite{Andrews_theory}. Are there maps between these schemes? If subsets of partitions such as $\mathcal P_B(A)$ are analogous to partition ideals, do there exist closely-related subsets analogous to 
equivalent ideals in Andrews's theory? Conversely, might cyclic maps like those in Section \ref{Sect3} exist between equivalent partition ideals? 

\subsection*{Acknowledgment} The authors are grateful to the organizers of the Combinatory Analysis 2018 conference and the editors of these proceedings, and to the anonymous referee for many useful comments and references. Furthermore, the second author would like to thank George E. Andrews and Andrew V. Sills for conversations that informed this work.

\end{document}